\date{}
\renewcommand{\uppercasenonmath}[1]{}
\theoremstyle{plain}
\newtheorem{theorem}{Theorem}[section]
\newtheorem{lemma}[theorem]{Lemma}
\newtheorem{corollary}[theorem]{Corollary}
\newtheorem*{open question}{Open Question}
\theoremstyle{definition}
\newtheorem*{acknowledgement}{Acknowledgement}
\newtheorem*{theo}{Theorem}
\theoremstyle{remark}
\newcommand{\N}{\mathcal{N}}
\def\p{\frak p}
\def\m{\frak m}
\def\GV{{\rm GV}}
\def\tor{{\rm tor_{\rm GV}}}
\def\Hom{{\rm Hom}}
\def\Ext{{\rm Ext}}
\def\Ker{{\rm Ker}}
\def\Coker{{\rm Coker}}
\def\Ann{{\rm Ann}}
\def\GV{{\rm GV}}
\def\Max{{\rm Max}}
\def\DW{{\rm DW}}
\def\Max{{\rm Max}}
\begin{document}
\begin{center}
{\large  \bf On two versions of Cohen's theorem for modules}

\vspace{0.5cm}   Xiaolei Zhang$^{a}$,\  Hwankoo Kim$^{b}$,\ Wei Qi$^{c}$

{\footnotesize a.\ School of Mathematics and Statistics, Shandong University of Technology, Zibo 255049, China\\
b.\ Division of Computer Engineering, Hoseo University, Asan 31499, Republic of Korea\\
c.\  School of Mathematical Sciences, Sichuan Normal University, Chengdu 610068,  China\\

E-mail: hkkim@hoseo.edu\\}
\end{center}

\bigskip
\centerline { \bf  Abstract}
\bigskip
\leftskip10truemm \rightskip10truemm \noindent

Parkash and  Kour obtained  a new version of Cohen's theorem for Noetherian modules, which states that  a finitely generated $R$-module $M$ is Noetherian if and only if for every prime ideal $\p$ of $R$ with $\Ann(M)\subseteq \p$, there exists a finitely generated  submodule $N^\p$ of $M$ such that $\p M\subseteq N^\p\subseteq M(\p)$, where $M(\p)=\{x\in M\mid sx\in \p M $ for some $s\in R \setminus \p \}$.
In this paper, we generalize the Parkash and  Kour version of Cohen's theorem for Noetherian modules to those for $S$-Noetherian modules and $w$-Noetherian modules.
\vbox to 0.3cm{}\\
{\it Key Words:} Cohen's Theorem; $S$-Noetherian modules; $w$-Noetherian  modules.\\
{\it 2020 Mathematics Subject Classification:}  13E05, 13C12.

\leftskip0truemm \rightskip0truemm
\bigskip

\section{Introduction}
Throughout this article, all rings are commutative rings with identity and all modules are unitary. Let $R$ be a ring and $M$ an $R$-module. For a subset $U$ of $M$, we denote by $\langle U\rangle$ the submodule of $M$ generated by $U$. Early in 1950, Cohen showed that a ring $R$ is Noetherian if and only if every prime ideal of $R$ is finitely generated \cite[Theorem 2]{c50}. Let $\p$ be a prime ideal of $R$. Following \cite{mccs93}, we set $M(\p):=\{x\in M \mid sx\in \p M $ for some $s\in R \setminus \p \}$. Then $M(\p)$ is obviously a submodule of $M$. In 1994, Smith  extended  Cohen's Theorem from rings to  modules, which states that a finitely generated $R$-module $M$ is Noetherian if and only if the submodules $\p M$ of $M$ are finitely generated for every prime ideal $\p$ of $R$, if and only if $M(\p)$ is finitely generated  for each prime ideal $\p$ of $R$ with $\p\supseteq \Ann(M)$ \cite{s94}. Recently,    Parkash and  Kour generalized the Smith's result on Noetherian modules as follows:

\begin{theo}\cite[Theorem 2.1]{pk21} Let $R$ be a ring and  $M$  a finitely generated $R$-module. Then $M$ is Noetherian if and only if for every prime ideal $\p$ of $R$ with $\Ann(M)\subseteq \p$, there exists a finitely generated  submodule $N^\p$ of $M$ such that $\p M\subseteq N^\p\subseteq M(\p)$.
\end{theo}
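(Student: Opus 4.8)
The plan is to treat the two implications separately, with the forward one routine and the converse carrying all the weight. For $(\Rightarrow)$, assuming $M$ is Noetherian I would simply take $N^\p=\p M$ for every prime $\p\supseteq\Ann(M)$: then $\p M$ is finitely generated, being a submodule of a Noetherian module, and $\p M\subseteq M(\p)$ since any $x\in\p M$ satisfies $1\cdot x\in\p M$ with $1\in R\setminus\p$. This produces the required $N^\p$ immediately.

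For the converse I would argue by contradiction in Cohen's style. Suppose $M$ is not Noetherian and let $\Sigma$ be the set of non-finitely-generated submodules of $M$. The union of a chain in $\Sigma$ again lies in $\Sigma$, for were it generated by finitely many elements, these would already lie in one member of the chain, making that member finitely generated; so Zorn's Lemma gives a maximal element $N\in\Sigma$. Since $M$ is finitely generated we have $N\subsetneq M$, and by maximality every submodule of $\bar M:=M/N$ is finitely generated, so $\bar M$ is Noetherian.

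The heart of the argument is a Cohen-type finite-generation lemma. Writing $(N:_M a)=\{m\in M\mid am\in N\}$ and $(N:M)=\{r\in R\mid rM\subseteq N\}=\Ann(\bar M)$, I claim that if $a\notin(N:M)$ while $(N:_M a)\supsetneq N$, then $N$ is finitely generated: by maximality $N+aM$ and $(N:_M a)$ are both finitely generated, say $N+aM=\langle n_1,\dots,n_s\rangle+aM$ with $n_i\in N$ and $(N:_M a)=\langle x_1,\dots,x_p\rangle$, and a short chase then yields $N=\langle n_1,\dots,n_s,ax_1,\dots,ax_p\rangle$. Hence for every $a\in R$ either $aM\subseteq N$ or $a$ is a nonzerodivisor on $\bar M$; equivalently, the zerodivisors on $\bar M$ are exactly $\p:=(N:M)=\Ann(\bar M)$. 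This nonzerodivisor description forces $\p$ to be prime, and $\Ann(M)\subseteq\p$ is clear.

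Finally I would feed this prime into the hypothesis to obtain a finitely generated $N^\p$ with $\p M\subseteq N^\p\subseteq M(\p)$. Since $\p M\subseteq N$ and every $s\notin\p$ is a nonzerodivisor on $\bar M$, one checks that $M(\p)\subseteq N$, so $N^\p\subseteq N$; and $\p M\subseteq N^\p$ makes $M/N^\p$ a finitely generated $R/\p$-module. Because $\bar M$ is a faithful finitely generated Noetherian $R/\p$-module, the embedding $R/\p\hookrightarrow\bar M^{\,n}$ shows that $R/\p$ is a Noetherian ring; thus $M/N^\p$ is Noetherian, its submodule $N/N^\p$ is finitely generated, and therefore $N$ is finitely generated, contradicting $N\in\Sigma$. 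The main obstacle is precisely the passage from maximality to primality: recognizing via the finite-generation lemma that $(N:M)$ is exactly the zerodivisor set of $M/N$ is what makes $\p$ prime and simultaneously forces $M(\p)\subseteq N$, the two inputs that let the hypothesis close the contradiction.
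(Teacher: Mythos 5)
Your proof is correct, but its second half takes a genuinely different route from the paper's. The paper obtains this statement as the special case $S=\{1\}$ of its Theorem 2.1 (the $S$-Noetherian version), and the underlying argument there shares your first half: Zorn's Lemma on the set of non-finitely-generated submodules, a maximal element $N$, and Cohen-type element chases showing that $\p=(N:M)$ is prime and that $M(\p)\subseteq N$. Your organization of that half is tidier: you isolate a single finite-generation lemma (if $aM\not\subseteq N$ and $(N:_M a)\supsetneq N$, then $N$ is finitely generated) and read off both primality and $M(\p)\subseteq N$ at once from the identification of $(N:M)$ with the set of zerodivisors on $M/N$, whereas the paper runs essentially the same computation twice, once for each claim. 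The real divergence is the endgame. The paper writes $M=\langle m_1,\dots,m_k\rangle$, observes $\p=(N:M)=\bigcap_{i=1}^k(N:Rm_i)$, invokes \cite[Proposition 1.11]{am69} to get $\p=(N:Rm_j)$ for a single generator $m_j\notin N$, expands the generators of the finitely generated module $N+Rm_j$ to obtain $N\subseteq\langle w_1,\dots,w_m\rangle+\p m_j$, and then closes the contradiction via the chain $\p m_j\subseteq \p M\subseteq N^\p\subseteq M(\p)\subseteq N$, forcing $N=\langle w_1,\dots,w_m\rangle+N^\p$ to be finitely generated. You instead use that $\overline{M}=M/N$ is Noetherian (by maximality of $N$) and faithful finitely generated over $R/\p$, so the embedding $R/\p\hookrightarrow \overline{M}^{\,n}$ makes $R/\p$ a Noetherian ring; then $M/N^\p$ is a finitely generated $R/\p$-module, hence Noetherian, so its submodule $N/N^\p$ is finitely generated, and so is $N$. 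Both endgames are sound; yours is more conceptual and avoids the prime-avoidance step entirely, while the paper's purely element-wise computation is precisely what lets its argument transplant verbatim to the $S$-Noetherian and $w$-Noetherian settings, which is the paper's actual goal (your appeal to Noetherianness of the quotient ring $R/\p$ would not carry over so directly there).
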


In the past few decades, some generalizations of Noetherian rings or Noetherian modules have been extensively studied, especially via some multiplicative subsets $S$ of $R$ and the $w$-operation (see \cite{ad02,l15,l07,kkl14,ywzc11,zwk} for example).  And the related  Cohen's theorem has also been considered by many authors. In 2002, Anderson and Dumitrescu gave an analogue of Cohen's theorem for $S$-Noetherian modules, which states that an $S$-finite module $M$ is $S$-Noetherian if and only if the submodules of the form $\p M$ are $S$-finite for each prime ideal $\p$ of $R$ (disjoint from $S$) \cite[Proposition 4]{ad02}. In 1997, Wang and McCasland obtained an analogue of Cohen's theorem for strong Mori (SM) modules $M$ over integer domains for which  $M$  satisfies the ascending chain condition on $w$-submodules of $M$. In fact, they showed that a $w$-module $M$ is an SM module if and only if each $w$-submodule of $M$ is $w$-finite type, if and only if $M$ and every prime $w$-submodule of $M$ are $w$-finite type \cite[Theorem 4.4]{fm97}. In this paper,  we give both an $S$-analogue and a $w$-analogue of the Parkash and  Kour's result on Noetherian modules, which can be seen as generalizations of Cohen's theorem for modules.

\section{The Cohen's theorem for $S$-Noetherian modules}
Let $R$ be a ring and $S$ a multiplicative subset of $R$, that is  $1\in S$ and $s_1s_2\in S$ for any $s_1\in S$, $s_2\in S$. Let $M$ be an $R$-module. Recall from  \cite{ad02} that  $M$ is called \emph{$S$-finite} if $sM\subseteq F$ for some $s\in S$ and some finitely generated submodule $F$ of $M$. Also, $M$ is called \emph{$S$-Noetherian} if each submodule of $M$ is an $S$-finite $R$-module. Then $R$ is called an $S$-Noetherian ring if $R$ is $S$-Noetherian as an $R$-module.  Anderson and Dumitrescu obtained a Cohen-type theorem for $S$-Noetherian modules: An $S$-finite $R$-module $M$ is $S$-Noetherian if and only if the submodules of the form $\p M$ are $S$-finite for each prime ideal $\p$ of $R$ (disjoint from $S$) \cite[Proposition 4]{ad02}. Now we give a ``stronger" version of Cohen's theorem for $S$-Noetherian modules which can be seen as an $S$-analogue of the Parkash and Kour's result \cite[Theorem 2.1]{pk21}.

\begin{theorem} Let $R$ be a ring and $S$ a multiplicative subset of $R$. Let $M$ be an $S$-finite $R$-module. Then $M$ is $S$-Noetherian if and only if for every prime ideal $\p$ of $R$ with $\Ann(M)\subseteq \p$, there exists an $S$-finite submodule $N^\p$ of $M$ such that $\p M\subseteq N^\p\subseteq M(\p)$.
\end{theorem}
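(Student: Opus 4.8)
The forward direction is immediate: if $M$ is $S$-Noetherian then every submodule of $M$ is $S$-finite, so one may simply take $N^\p=\p M$, noting that $\p M\subseteq M(\p)$ always holds because $1\in R\setminus\p$. The whole content lies in the converse, which I would prove by contradiction following the Cohen--Zorn scheme adapted to $S$-finiteness.

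Suppose $M$ is not $S$-Noetherian and let $\Sigma$ be the (nonempty) set of non-$S$-finite submodules of $M$. I would first check that $\Sigma$ is closed under unions of chains: if $N=\bigcup N_i$ were $S$-finite, a finite generating set of some $s$-multiple of $N$ would already lie in one $N_i$, forcing that $N_i$ to be $S$-finite. So Zorn's Lemma yields a maximal element $N\in\Sigma$, and I set $\p=(N:_RM)$. The engine of the argument is a single observation: if $a\in R$ is such that both $N+aM$ and $(N:_Ma):=\{x\in M\mid ax\in N\}$ strictly contain $N$ (hence are $S$-finite by maximality), then $N$ itself is $S$-finite. This follows from a direct cancellation computation: writing $s_A(N+aM)\subseteq\langle n_i+am_i\rangle$ and $s_B(N:_Ma)\subseteq\langle v_j\rangle$ for suitable $s_A,s_B\in S$ with $n_i\in N$ and $av_j\in N$, one checks that $s_As_Bx\in\langle n_i, av_j\rangle$ for every $x\in N$. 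Applying this observation twice yields two structural facts, each by producing the forbidden $a$: first, $\p$ is prime (otherwise $ab\in\p$ with $a,b\notin\p$ makes $N+aM$ and $(N:_Ma)\supseteq N+bM$ both strict); second, $M/N$ is torsion-free over the domain $R/\p$ (otherwise $am\in N$ with $a\notin\p$ and $m\notin N$ makes $N+aM$ and $(N:_Ma)\ni m$ both strict). Since $\Ann(M)\subseteq\p$, the prime $\p$ is eligible for the hypothesis.

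Now I would invoke the hypothesis at $\p$, obtaining an $S$-finite $N^\p$ with $\p M\subseteq N^\p\subseteq M(\p)$. Torsion-freeness of $M/N$ forces $M(\p)\subseteq N$: any $x$ with $sx\in\p M\subseteq N$ for some $s\notin\p$ must satisfy $x\in N$. Hence $N^\p\subseteq N$, and I may pass to $W:=M/N^\p$, which is $S$-finite and, since $\p M\subseteq N^\p$, satisfies $\p W=0$, i.e. $W$ is a module over the domain $R/\p$; the image $N'$ of $N$ inherits maximality among non-$S$-finite submodules of $W$, and $W/N'\cong M/N$ remains torsion-free. The decisive computation is then the collapse of a colon ideal: choosing any $w\in W\setminus N'$ and using that $N'+Rw$ is $S$-finite, write $s_1(N'+Rw)\subseteq\langle g_j+r_jw\rangle$ with $s_1\in S$ and $g_j\in N'$; for $x\in N'$ one gets $s_1x=g+rw$ with $g\in\langle g_1,\dots,g_k\rangle$ and $rw\in N'$, so $r\in(N':_Rw)$. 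But torsion-freeness of $W/N'$ together with $\p W=0$ forces $(N':_Rw)=\p$, whence $rw\in\p W=0$ and $s_1x=g$. Thus $s_1N'\subseteq\langle g_1,\dots,g_k\rangle$, so $N'$ is $S$-finite; combined with the $S$-finiteness of $N^\p$ this makes $N$ $S$-finite, the desired contradiction.

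The main obstacle, and the step I expect to require the most care, is precisely this last maneuver: recognizing that the hypothesis should be used not to control $\p M$ directly but to furnish a submodule $N^\p$ to quotient by, and then verifying that over $W=M/N^\p$ the torsion-freeness of $M/N$ collapses the colon $(N':_Rw)$ down to the annihilator $\p$ of $W$. Everything hinges on the chain of inclusions $\p M\subseteq N^\p\subseteq M(\p)\subseteq N$, so the proof must first secure torsion-freeness of $M/N$ (in order to obtain $M(\p)\subseteq N$) before the quotient argument can even be set up. The remaining points --- that $S$-finiteness passes to quotients, is closed under the relevant extensions, and behaves well along the correspondence between submodules of $M$ containing $N^\p$ and submodules of $W$ --- are routine, and I would dispatch them quickly.
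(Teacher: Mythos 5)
Your proof is correct, and while its first half coincides with the paper's argument --- the same Zorn's lemma setup, and the same cancellation computation (which the paper writes out twice in full rather than isolating as a single ``engine'') establishing that $\p=(N:M)$ is prime and that $M(\p)\subseteq N$ --- your endgame is genuinely different. The paper stays inside $M$: it writes $sM\subseteq F=\langle m_1,\dots,m_k\rangle$, proves $\p\cap S=\emptyset$, identifies $\p=(N:F)=\bigcap_{i=1}^k(N:Rm_i)$, invokes prime avoidance in the form of \cite[Proposition 1.11]{am69} to find a single generator $m_j$ with $(N:Rm_j)=\p$, and runs the decisive computation on $N+Rm_j$, the hypothesis entering through the chain $s_5N\subseteq\langle w_1,\dots,w_m\rangle+\p m_j\subseteq\langle w_1,\dots,w_m\rangle+N^\p\subseteq N$. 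You instead record the stronger fact that $M/N$ is torsion-free over $R/\p$ (the paper's proof of $M(\p)\subseteq N$ secretly establishes this too, since it only uses $ty\in N$ rather than $ty\in\p M$, but the paper never states or exploits it), and then use the hypothesis structurally: quotienting by $N^\p$ kills $\p$, so in $W=M/N^\p$ the colon $(N':_Rw)$ collapses to $\p$ for \emph{every} $w\notin N'$, and any such $w$ finishes the argument. This buys two simplifications: prime avoidance disappears, and the claim $\p\cap S=\emptyset$ becomes unnecessary (your only use of the $S$-finiteness of $M$ is to guarantee $N\neq M$, hence that some $w$ exists). The trade-off is that you must verify the transfer statements --- $S$-finiteness passes to quotients and to extensions, and maximality of $N$ descends to $N'$ under the correspondence of submodules --- which the paper's in-place computation never needs; these are indeed routine, as you say, but for completeness they should be written out, since the extension step (from $N'$ and $N^\p$ both $S$-finite to $N$ $S$-finite) is exactly where the final contradiction is produced.
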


\begin{proof} Suppose that $M$ is an $S$-Noetherian $R$-module and let $\p$ be a prime ideal with $\Ann(M)\subseteq \p$.  If we take $N^\p:=\p M$, then $N^\p$ is certainly an $S$-finite submodule of $M$ satisfying $\p M\subseteq N^\p\subseteq M(\p)$.

Conversely, suppose on the contrary that $M$ is not $S$-Noetherian. Let $\N$ be the set of all submodules of $M$ which are not $S$-finite. Then $\N$ is non-empty. Make a partial order on $\N$ by defining  $N_1\leq N_2$ if and only if $N_1\subseteq N_2$ in $\N$. Let $\{N_i\mid i\in \Lambda\}$  be a chain in  $\N$. Set $N:=\bigcup\limits_{i\in \Lambda}N_i$. Then $N$ is not $S$-finite. Indeed, suppose $sN\subseteq \langle x_1, \dots , x_n\rangle\subseteq N$ for some $s\in S$. Then there exists $i_0\in \Lambda$ such that $\{ x_1, \dots , x_n\}\subseteq N_{i_0}$. Thus $sN_{i_0}\subseteq sN\subseteq \langle x_1, \dots , x_n\rangle\subseteq N_{i_0}$ implies that $N_{i_0}$ is $S$-finite, which is a contradiction.
Then by Zorn's Lemma, $\N$ has a maximal element, which is also denoted by $N$. Set $\p:=(N:M)=\{r\in R \mid rM\subseteq N\}$.

We claim that $\p$ is a prime ideal of $R$. Assume on the contrary that there exist $a,b\in R \setminus \p$ such that $ab\in \p$. Since $a,b\in R \setminus \p$, we have $aM\not\subseteq N$ and $bM\not\subseteq N$. Therefore $N+aM$ is $S$-finite. Let $\{y_1,\dots,y_m\}$ be a subset of $N+aM$ such that $s_1(N+aM)\subseteq \langle y_1,\dots,y_m\rangle$ for some $s_1 \in S$. Write $y_i=w_i+az_i$ for some $w_i\in N$ and $z_i\in M\ (1\leq i\leq m)$. Set $L:=\{x\in M\mid ax\in N\}$. Then $N+bM\subseteq L$, and hence $L$ is also $S$-finite. Let $\{x_1,\dots,x_k\}$ be a subset of $L$ such that $s_2L \subseteq\langle x_1,\dots,x_k \rangle$ for some $s_2 \in S$. Let $n \in N$ and write $$s_1n=\sum\limits_{i=1}^mr_iy_i=\sum\limits_{i=1}^mr_iw_i+a\sum\limits_{i=1}^mr_iz_i.$$
Then $\sum\limits_{i=1}^mr_iz_i\in L$. Thus $s_2\sum\limits_{i=1}^mr_iz_i=\sum\limits_{i=1}^kr'_ix_i$ for some $r'_i\in R$ ($i=1,\dots,k$).
So $s_1s_2n=\sum\limits_{i=1}^ms_2r_iw_i+\sum\limits_{i=1}^kr'_iax_i$. Thus $s_1s_2N\subseteq \langle w_1,\dots, w_m, ax_1,\dots, ax_k \rangle$ implies that $N$ is $S$-finite, which is a  contradiction.

We also claim that $M(\p)\subseteq N$. Suppose on the contrary that there exists $y\in M(\p)$ such that $y\not\in N$. Then there exists $t\in R \setminus \p$ such that $ty\in \p M=(N:M)M\subseteq N$. As $t\not\in \p=(N:M)$,  it follows that $tM\not\subseteq N$. Therefore $N+tM$ is $S$-finite. Let $\{u_1,\dots,u_m\}$ be a subset of $N+tM$ such that $s_3(N+tM)\subseteq \langle u_1,\dots,u_m \rangle$ for some $s_3\in S$. Write $u_i=w_i+tz_i ~  (i=1,\dots,m)$ with $w_i\in N$ and $z_i\in M$. Set $T:=\{x\in M \mid tx\in N\}$. Then $N\subset N+Ry\subseteq T$, and hence $T$ is $S$-finite. Then there exists a subset $\{v_1,\dots,v_l\}$ of $T$ such that $s_4T\subseteq \langle v_1,\dots,v_l\rangle$ for some $s_4 \in S$. Let $n$ be an element in $N$. Then $$s_3n=\sum\limits_{i=1}^mr_iu_i=\sum\limits_{i=1}^mr_iw_i+t\sum\limits_{i=1}^mr_iz_i.$$ Thus $\sum\limits_{i=1}^mr_iz_i\in T$. So $s_4\sum\limits_{i=1}^mr_iz_i=\sum\limits_{i=1}^lr'_iv_i$ for some $r'_i\in R ~(i=1,\dots,l)$. Hence $s_3s_4n=\sum\limits_{i=1}^ms_4r_iw_i+\sum\limits_{i=1}^lr'_itv_i.$ Thus $s_3s_4N\subseteq \langle w_1,\dots, w_m, tv_1,\dots, tv_l \rangle $ implies that $N$ is $S$-finite, which is a contradiction.

Let $F=\langle m_1, \dots , m_k\rangle$ be a submodule of $M$ such that $sM\subseteq F$ for some $s\in S$. Claim that $\p\cap S=\emptyset$. Indeed, if $s'\in \p$ for some $s'\in S$, then $s'M\subseteq N\subseteq M$. So $ss'N\subseteq ss'M\subseteq s'F \subseteq s'M \subseteq  N$ implies that $N$ is $S$-finite, which is a  contradiction. Note that  $\p=(N:M)\subseteq (N:F)\subseteq(N:sM)=(\p:s)=\p$ as $\p$ is a prime ideal of $R$. So $\p=(N:F)=(N:\langle m_1, \dots , m_k\rangle)=\bigcap\limits_{i=1}^k(N:Rm_i)$. By \cite[Proposition 1.11]{am69}, $\p=(N:Rm_j)$ for some $1\leq j\leq k$. Since $m_j\not\in N$, it follows that $N+Rm_j$ is $S$-finite. Let $\{y_1,\dots,y_m\}$ be a subset of $N+Rm_j$ such that $s_5(N+Rm_j)\subseteq \langle y_1,\dots,y_m \rangle$ for some $s_5\in S$. Write $y_i=w_i+a_im_j$ for some $w_i\in N$ and $a_i\in R ~(i=1,\dots,m)$. Let $n \in N$. Then $s_5n=\sum\limits_{i=1}^mr_i(w_i+a_im_j)=\sum\limits_{i=1}^mr_iw_i+(
\sum\limits_{i=1}^mr_ia_i)m_j$. Thus $(\sum\limits_{i=1}^mr_ia_i)m_j\in N $. So $\sum\limits_{i=1}^mr_ia_i\in \p$. Thus $s_5N\subseteq \langle w_1,\dots,  w_m \rangle +\p m_j$. As $\Ann(M)\subseteq (N:M)=\p$, there exists an $S$-finite submodule $N^\p$ of $M$ such that $\p M\subseteq N^\p\subseteq M(\p)$. Thus
\begin{eqnarray*}
  s_5N & \subseteq & \langle w_1,\dots, w_m \rangle + \p m_j \\
   & \subseteq & \langle w_1,\dots, w_m \rangle + \p M \\
   & \subseteq & \langle w_1,\dots,  w_m \rangle + N^\p \\
   &\subseteq & \langle w_1,\dots,  w_m \rangle +M(\p) \\
  & \subseteq & N
\end{eqnarray*}
Since $N^\p+\langle w_1,\dots,  w_m \rangle $ is $S$-finite, it follows that $N$ is also  $S$-finite,  which is a  contradiction.
Hence $M$ is $S$-Noetherian.
\end{proof}

Taking $S=\{1\}$, we can recover the following Parkash and Kour's result.

\begin{corollary} \cite[Theorem 2.1]{pk21} Let $R$ be a ring and $M$  a finitely generated $R$-module. Then $M$ is Noetherian if and only if for every prime ideal $\p$ of $R$ with $\Ann(M)\subseteq \p$, there exists a finitely generated submodule $N^\p$ of $M$ such that $\p M\subseteq N^\p\subseteq M(\p)$.
\end{corollary}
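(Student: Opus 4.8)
The plan is to obtain this statement as nothing more than the special case $S=\{1\}$ of the preceding Theorem, so that no fresh argument is required beyond checking that the $S$-theoretic notions degenerate to the classical ones. First I would record what $S$-finiteness means when $S=\{1\}$: a module is $S$-finite precisely when $1\cdot M=M\subseteq F$ for some finitely generated submodule $F\subseteq M$, which forces $M=F$ and hence $M$ finitely generated. Thus, for $S=\{1\}$, the property ``$S$-finite'' and the property ``finitely generated'' coincide for every $R$-module, and consequently ``$S$-Noetherian'' (every submodule $S$-finite) coincides with ``Noetherian'' (every submodule finitely generated).

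With these identifications in hand, I would verify that the hypotheses and conclusion of the Corollary match those of the Theorem verbatim once $S=\{1\}$ is substituted. The standing hypothesis that $M$ is finitely generated becomes the hypothesis that $M$ is $S$-finite; the conclusion ``$M$ is Noetherian'' becomes ``$M$ is $S$-Noetherian''; and the asserted existence of a finitely generated $N^\p$ with $\p M\subseteq N^\p\subseteq M(\p)$ becomes the existence of an $S$-finite $N^\p$ with the same two containments. I would also note that the primes quantified over agree: both statements range over prime ideals $\p$ with $\Ann(M)\subseteq \p$. The disjointness condition $\p\cap S=\emptyset$ that appears in the body of the Theorem is automatic here, since $1\notin\p$ for every proper ideal, so no primes are lost or gained in the specialization.

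Applying the Theorem then delivers the Corollary at once. I do not expect a genuine mathematical obstacle: the only point to be careful about is the purely definitional one of confirming that the degenerate multiplicative set collapses $S$-finiteness to finite generation, since all the substantive content (the Zorn's Lemma maximal-counterexample construction, the primeness of $\p=(N:M)$, the inclusion $M(\p)\subseteq N$, and the final generating set) already resides in the Theorem's proof. Should a self-contained argument be preferred instead, I would simply replay that proof with every multiplier $s_i$ set equal to $1$; then each ``$S$-finite'' submodule is literally finitely generated, and the concluding chain of containments forces $N=\langle w_1,\dots,w_m\rangle+N^\p$, exhibiting a finite generating set for the maximal non-finitely-generated submodule $N$ and yielding the desired contradiction.
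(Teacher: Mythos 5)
Your proposal is correct and is exactly the paper's approach: the paper derives this corollary solely by the remark ``Taking $S=\{1\}$'' applied to Theorem 2.1, and your verification that $S$-finiteness collapses to finite generation (hence $S$-Noetherian to Noetherian) for $S=\{1\}$ is precisely the routine check this specialization requires.
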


\section{The Cohen's theorem for $w$-Noetherian modules}
We recall some basic knowledge on the $w$-operation over a commutative ring. One can refer to {\cite{fk16}} for more details. Let $R$ be a commutative ring and $J$ a finitely generated ideal of $R$. Then $J$ is called a \emph{$\GV$-ideal} if the natural homomorphism $R\rightarrow \Hom_R(J,R)$ is an isomorphism. The set of $\GV$-ideals is denoted by $\GV(R)$. Let $M$ be an $R$-module. Define
$$\tor(M):=\{x\in M \mid Jx=0~\mbox{for some}~ J\in \GV(R) \}.$$
An $R$-module $M$ is said to be \emph{$\GV$-torsion} (resp., \emph{$\GV$-torsion-free}) if $\tor(M)=M$ (resp., $\tor(M)=0$). A $\GV$-torsion-free module $M$ is called a {$w$-module} if $\Ext_R^1(R/J,M)=0$ for any $J\in \GV(R)$. A \emph{$\DW$ ring} $R$ is a ring for which every $R$-module is a $w$-module.
A \emph{maximal $w$-ideal} is an ideal of $R$ which is maximal among the $w$-submodules of $R$. The set of all maximal $w$-ideals is denoted by $w$-$\Max(R)$. Each maximal $w$-ideals is a  prime ideal (see {\cite[Theorem 6.2.14]{fk16}}).

An $R$-homomorphism $f:M\rightarrow N$ is said to be a \emph{$w$-monomorphism} (resp., \emph{$w$-epimorphism}, \emph{$w$-isomorphism}) if for any $ \p\in w$-$\Max(R)$, $f_{\p}:M_{\p}\rightarrow N_{\p}$ is a monomorphism (resp., an epimorphism, an isomorphism). Note that $f$ is a $w$-monomorphism (resp., $w$-epimorphism) if and only if $\Ker(f)$ (resp., $\Coker(f)$) is $\GV$-torsion.
An $R$-module $M$ is said to be  \emph{$w$-finite type} if there exist a finitely generated free module $F$ and a $w$-epimorphism $g: F\rightarrow M$. Obviously, an $R$-module $M$ is $w$-finite type if and only if there is a finitely generated submodule $N$ of $M$ such that $M/N$ is $\GV$-torsion.

\begin{lemma}\label{loc-w-f} Let $N$ be a $w$-submodule of a $\GV$-torsion-free $w$-finite type  module $M$. Then $(N:_RM)_{\p}= (N_{\p}:_{R_{\p}}M_{\p})$ for any prime $w$-ideal $\p$ of $R$.
\end{lemma}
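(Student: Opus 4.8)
The plan is to prove the two inclusions separately, since one is formal and the other carries all the content. The inclusion $(N:_RM)_\p\subseteq(N_\p:_{R_\p}M_\p)$ holds for every module and every prime ideal and needs no hypotheses: if $r\in(N:_RM)$ and $t\in R\setminus\p$, then $rM\subseteq N$ localizes to $(r/t)M_\p\subseteq N_\p$, so that $r/t\in(N_\p:_{R_\p}M_\p)$; as such elements generate $(N:_RM)_\p$, the inclusion follows. So the substance of the lemma lies entirely in the reverse inclusion.

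For the reverse inclusion I would use that $M$ is $w$-finite type, which by the remark preceding the lemma supplies a finitely generated submodule $M_0=\langle m_1,\dots,m_k\rangle$ of $M$ with $M/M_0$ being $\GV$-torsion. Given $x/s\in(N_\p:_{R_\p}M_\p)$ with $s\in R\setminus\p$, I would apply the containment $(x/s)M_\p\subseteq N_\p$ to each generator: from $xm_i/s\in N_\p$ the definition of localization yields $u_i\in R\setminus\p$ with $u_ixm_i\in N$. Setting $u:=u_1\cdots u_k\in R\setminus\p$, the finiteness of the generating set lets me clear all denominators simultaneously, giving $uxM_0\subseteq N$.

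The key step is then to upgrade $uxM_0\subseteq N$ to $uxM\subseteq N$, and this is exactly where the $w$-module structure of $N$ is indispensable. For an arbitrary $m\in M$, since $M/M_0$ is $\GV$-torsion there is some $J\in\GV(R)$ with $Jm\subseteq M_0$, whence $J(uxm)\subseteq uxM_0\subseteq N$. Because $N$ is a $w$-submodule of the $\GV$-torsion-free module $M$, the quotient $M/N$ is again $\GV$-torsion-free: applying $\Hom_R(R/J,-)$ to $0\to N\to M\to M/N\to 0$ and using $\Hom_R(R/J,M)=0$ (as $M$ is $\GV$-torsion-free) together with $\Ext_R^1(R/J,N)=0$ (as $N$ is a $w$-module) gives $\Hom_R(R/J,M/N)=0$ for all $J\in\GV(R)$. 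Hence $J(uxm)\subseteq N$ forces $uxm\in N$, and since $m$ was arbitrary, $uxM\subseteq N$, i.e. $ux\in(N:_RM)$. Then $x/s=ux/(us)$ with $us\in R\setminus\p$ shows $x/s\in(N:_RM)_\p$, completing the argument.

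I expect the propagation step to be the main obstacle. Without finite generation one cannot clear a single denominator valid for all of $M$ at once, which is why the argument must route through the $w$-dense finitely generated submodule $M_0$ and then recover membership in $N$ from the $\GV$-torsion-freeness of $M/N$. I would be careful to verify that $u$ can indeed be chosen uniformly (a finite product suffices precisely because $M_0$ has finitely many generators) and that the $w$-submodule hypothesis is invoked on the quotient $M/N$, not merely on $N$ itself.
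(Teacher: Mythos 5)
Your proposal is correct and takes essentially the same approach as the paper: fix a finitely generated submodule $M_0$ with $M/M_0$ being $\GV$-torsion, clear denominators on its finitely many generators to get $uxM_0\subseteq N$ with $u\notin\p$, and then propagate this to $uxM\subseteq N$ using the $w$-module property of $N$. The only (harmless) difference is the justification of that last step: the paper invokes the injective-envelope characterization of $w$-modules (\cite[Theorem 6.16]{fk16}), whereas you derive $\Hom_R(R/J,M/N)=0$, i.e.\ the $\GV$-torsion-freeness of $M/N$, directly from the long exact sequence — both are valid.
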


\begin{proof}
Let $\p$ be a prime $w$-ideal of $R$. Obviously, $(N:_RM)_{\p}\subseteq (N_{\p}:_{R_{\p}}M_{\p})$. On the other hand,  since $M$ is a $w$-finite type $R$-module, there exists a finitely generated submodule $F=\langle m_1,\dots,m_n\rangle$ of $M$ satisfying that  for any $m\in M$ there exists $J\in\GV(R)$  such that $Jm\subseteq F$. Let $\frac{r}{s}$  be an element in $(N_{\p}:_{R_{\p}}M_{\p})$.  Then for each $i=1,\dots,n$,  there exists $s_i\in R \setminus \p$ such that $s_irm_i\in N$. Thus $s_1\cdots s_n rF\in N$. So $s_1\cdots s_n rJm\in N$ for all $m\in M\subseteq E(M)$, where $E(M)$ is the injective envelope of $M$. By \cite[Theorem 6.16]{fk16}, $s_1\cdots s_nrM\subseteq N$  since $N$ is a $w$-module. Hence $s_1\cdots s_nr\in (N:_RM)$. Consequently,   $\frac{r}{s}=\frac{s_1\cdots s_nr}{s_1\cdots s_n s}\in (N:_RM)_{\p}$.
\end{proof}

Let $M$ be an $R$-module. Recall from \cite[Definition 8.1]{fk16} that $M$ is called a \emph{$w$-Noetherian module} if every submodule of $M$ is $w$-finite type. And  $R$ is called a \emph{$w$-Noetherian ring} if $R$ is $w$-Noetherian as an $R$-module.

\begin{theorem} Let $R$ be a ring and $M$ a  $\GV$-torsion-free $w$-finite type $R$-module. Then $M$ is a $w$-Noetherian module if and only if for every prime $w$-ideal $\p$ of $R$ with $\Ann(M)\subseteq \p$, there exists a $w$-finite type submodule $N^{\p}$ of $M$ such that $\p M\subseteq N^{\p}\subseteq M(\p)$.
\end{theorem}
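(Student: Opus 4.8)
The plan is to imitate, step for step, the proof of the preceding $S$-Noetherian theorem, replacing ``$S$-finite'' by ``$w$-finite type'' and each single witness $s\in S$ by a $\GV$-ideal. The forward implication is immediate: if $M$ is $w$-Noetherian, take $N^{\p}:=\p M$, which is of $w$-finite type since every submodule of $M$ is, and which satisfies $\p M\subseteq N^{\p}\subseteq M(\p)$ because $x=1\cdot x\in\p M$ forces $x\in M(\p)$. For the converse I would argue by contradiction. Let $\N$ be the set of submodules of $M$ that are \emph{not} of $w$-finite type; assuming $M$ is not $w$-Noetherian, $\N\neq\emptyset$.

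First I would check that $\N$ is closed under unions of chains, so that Zorn's Lemma applies. If $N=\bigcup_{i}N_i$ (a chain in $\N$) were of $w$-finite type, a finite generating set of the witnessing submodule would lie in a single $N_{i_0}$, and since a submodule of a $\GV$-torsion module is $\GV$-torsion, $N_{i_0}$ would already be of $w$-finite type --- a contradiction. Let $N$ be a maximal element of $\N$. Two structural facts come next. \emph{(i)} $N$ is a $w$-submodule: its $w$-envelope $N_w$ satisfies that $N_w/N$ is $\GV$-torsion, so if $N_w\neq N$ then $N_w$ is of $w$-finite type by maximality, which (since a finitely generated submodule with $\GV$-torsion quotient then sits inside $N$) would force $N$ to be of $w$-finite type. \emph{(ii)} Consequently $\p:=(N:M)$ is a $w$-ideal with $\Ann(M)\subseteq\p$; indeed, if $Jr\subseteq\p$ for some $J\in\GV(R)$ then $J(rm)\subseteq\p M\subseteq N$ for all $m\in M$, whence $rm\in N$ by the $w$-saturation property of the $w$-module $N$ (\cite[Theorem 6.16]{fk16}, as already used in Lemma \ref{loc-w-f}).

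I would then transcribe the two contradiction arguments of the $S$-Noetherian proof to establish that \emph{$\p$ is prime} and that \emph{$M(\p)\subseteq N$}. In each case the hypothesis ``$N+aM$ (resp.\ the relevant colon submodule) strictly contains $N$, hence is of $w$-finite type'' is available by maximality, and the conclusion is that a finitely generated submodule sits inside $N$ with $\GV$-torsion quotient, so $N$ itself is of $w$-finite type. Finally, using that $M$ is of $w$-finite type, fix finitely generated $F=\langle m_1,\dots,m_k\rangle\subseteq M$ with $M/F$ $\GV$-torsion; the $w$-saturation property gives $\p=(N:F)=\bigcap_{i=1}^k(N:Rm_i)$, and \cite[Proposition 1.11]{am69} applied to the prime $\p$ yields $\p=(N:Rm_j)$ for some $j$, with $m_j\notin N$. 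Now invoke the hypothesis to obtain a $w$-finite type $N^{\p}$ with $\p M\subseteq N^{\p}\subseteq M(\p)\subseteq N$: writing the $w$-finite type data of $N+Rm_j$ as $w_i+a_im_j$ shows, for each $n\in N$, that $Jn\subseteq\langle w_1,\dots,w_m\rangle+\p m_j\subseteq\langle w_1,\dots,w_m\rangle+N^{\p}$ for a suitable $J\in\GV(R)$. Hence $N/(\langle w_1,\dots,w_m\rangle+N^{\p})$ is $\GV$-torsion over the $w$-finite type submodule $\langle w_1,\dots,w_m\rangle+N^{\p}\subseteq N$, forcing $N$ to be of $w$-finite type --- the desired contradiction.

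The main obstacle is that ``$w$-finite type'' supplies, for each element, only \emph{some} $\GV$-ideal annihilating it modulo the finite part, rather than the single uniform witness $s\in S$ that makes the $S$-Noetherian manipulations clean; thus the primeness and $M(\p)\subseteq N$ steps cannot be copied character by character. I would resolve this by consolidating the several $\GV$-ideals that arise, using that a finite product of $\GV$-ideals is again a $\GV$-ideal contained in each factor. Equivalently, one may localize at the prime $w$-ideal $\p$: since a $\GV$-ideal cannot lie in the proper $w$-ideal $\p$, every $\GV$-torsion module dies in $M_{\p}$ and ``$w$-finite type'' becomes honest finite generation over $R_{\p}$, while Lemma \ref{loc-w-f} guarantees $(N:M)_{\p}=(N_{\p}:_{R_{\p}}M_{\p})$, so that the colon-ideal and primeness bookkeeping transfers faithfully between $R$ and $R_{\p}$. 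The terminal $\GV$-torsion-quotient conclusion, being intrinsically pointwise, needs no such uniformity and goes through directly.
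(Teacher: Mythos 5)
Your proposal is correct and follows the paper's overall strategy: Zorn's Lemma on non-$w$-finite-type submodules, the colon ideal $\p=(N:M)$ shown to be a prime $w$-ideal with $M(\p)\subseteq N$, prime avoidance over the generators of the finitely generated $F$ with $M/F$ $\GV$-torsion, and the terminal contradiction via the hypothesized $N^\p$. Two of your technical choices, however, genuinely diverge from the paper, and both are simplifications. First, the paper takes $\N$ to be the set of \emph{$w$-submodules} of $M$ that are not of $w$-finite type; it must then check that the union of a chain of $w$-submodules is a $w$-module, and --- more seriously --- its later appeals to maximality concern modules such as $N+aM$, $\{x\in M\mid ax\in N\}$ and $N+Rm_j$, which need not be $w$-submodules at all, so maximality in the paper's poset does not literally apply to them (a point the paper glosses over). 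Your poset of \emph{all} non-$w$-finite-type submodules makes each of these maximality applications immediate, and you recover the needed saturation of the maximal element $N$ post hoc in your step (i). Second, the paper proves $\p=(N:_RM)=(N:_RF)$ by localizing at maximal $w$-ideals via Lemma \ref{loc-w-f} and the fact that both colons are $w$-ideals; your direct saturation argument (from $Jm\subseteq F$ and $rF\subseteq N$ conclude $J(rm)\subseteq N$, hence $rm\in N$) bypasses that lemma and the localization bookkeeping entirely. For primeness of $\p$ and for $M(\p)\subseteq N$ you use exactly the paper's device: finite products of $\GV$-ideals as the substitute for the single witness $s\in S$.

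Two loose ends should be tightened. In step (i), the finitely generated witness $G\subseteq N_w$ with $N_w/G$ $\GV$-torsion does not ``sit inside $N$'' as your parenthesis claims; you must first choose $J\in\GV(R)$ (a finite product of $\GV$-ideals, one per generator of $G$) with $JG\subseteq N$, and then observe that $JG$ is finitely generated and $N/JG$ is $\GV$-torsion, whence $N$ is of $w$-finite type. Relatedly, what step (i) actually establishes is that $N$ is $w$-closed (saturated) in $M$, i.e. $M/N$ is $\GV$-torsion-free; when $M$ itself is not a $w$-module this is strictly weaker than $N$ being a $w$-module. That weaker property is all that your step (ii) and your computation $\p=(N:F)=\bigcap_{i=1}^k(N:Rm_i)$ ever use, so the proof stands, but you should not refer to ``the $w$-module $N$'', nor cite Lemma \ref{loc-w-f} or \cite[Section 6.10, Exercise 6.8]{fk16} on its behalf, since those statements are formulated for genuine $w$-submodules; your self-contained saturation arguments are the right substitute.
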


\begin{proof} Suppose that $M$ is a  $w$-Noetherian $R$-module and let $\p$ be a prime $w$-ideal with $\Ann(M)\subseteq \p$.  If we  take $N^\p:=\p M$, then $N^\p$ is certainly a  $w$-finite type submodule of $M$ satisfying $\p M\subseteq N^\p\subseteq M(\p)$.

Conversely, suppose on the contrary that $M$ is not $w$-Noetherian. Let $\N$ be the set of all $w$-submodules of $M$ which are not $w$-finite type. Then $\N$ is non-empty. Make a partial order on $\N$ by defining  $N_1\leq N_2$ if and only if $N_1\subseteq N_2$ in $\N$. Let $\{N_i\mid i\in \Lambda\}$  be a chain in  $\N$. Set $N:=\bigcup\limits_{i\in \Lambda}N_i$. Then $N$ is not $w$-finite type. Indeed, suppose there is an exact $0\rightarrow F\rightarrow N\rightarrow T\rightarrow 0$ with $T$  $\GV$-torsion and $F=\langle x_1, \dots , x_n\rangle$ finitely generated. Then there exists $i_0\in \Lambda$ such that $F\subseteq N_{i_0}$.  Consider the following commutative diagram with exact rows:
$$\xymatrix@R=20pt@C=25pt{
0\ar[r] &F\ar@{=}[d]\ar[r]^{} &N_{i_0}\ar@{^{(}->}[d]\ar[r] & T'\ar@{^{(}->}[d]\ar[r]  & 0 \\
0 \ar[r]^{} &F  \ar[r]^{} & N\ar[r]^{} &T\ar[r]^{} & 0 \\ }$$
Since $T'$ is a submodule of $T$, we have that $T'$ being $\GV$-torsion implies that $N_{i_0}$ is $w$-finite type, which is a contradiction.  Since $N$ is a $w$-submodule of $M$, it follows that $N\in\N$. So by Zorn's Lemma, $\N$ has a maximal element, which is also denoted by $N$. Set $\p:=(N:M)=\{r\in R \mid rM\subseteq N\}$. Then $\p$ is a $w$-ideal by \cite[Section 6.10, Exercise 6.8]{fk16}.

We claim that $\p$ is a prime ideal of $R$. Assume on the contrary that there exist $a,b\in R \setminus \p$ such that $ab\in \p$. Since $a,b\in R \setminus \p$, we have $aM\not\subseteq N$ and $bM\not\subseteq N$. Therefore $N+aM$ is $w$-finite type. Let $\{y_1,\dots,y_m\}$ be a subset of $N+aM$ such that  $0\rightarrow F_1\rightarrow N+aM\rightarrow T_1\rightarrow 0$ be an exact sequence with $T_1$  $\GV$-torsion and $F_1=\langle y_1,\dots,y_m\rangle$ finitely generated. Write $y_i=w_i+az_i$ for some $w_i\in N$ and $z_i\in M\ (1\leq i\leq m)$. Set $L:=\{x\in M\mid ax\in N\}$. Then $N+bM\subseteq L$, and hence $L$ is $w$-finite type. Let $0\rightarrow F_2\rightarrow L\rightarrow T_2\rightarrow 0$ be an exact sequence with $T_2$  $\GV$-torsion and $F_2=\langle x_1,\dots,x_k\rangle$ finitely generated.
Let $n$ be an element in $N$. Then there is a $\GV$-ideal $J_1=\langle j^1_1,\dots,j^p_1\rangle$ such that $J_1n\subseteq F_1$. So there is $\{r^t_i\mid t=1,\dots,p; i=1,\dots, m \}\subseteq  R$ such that
$$j^t_1n=\sum\limits_{i=1}^mr^t_iy_i=\sum\limits_{i=1}^mr^t_iw_i+a\sum\limits_{i=1}^mr^t_iz_i\  (t=1,\dots,p).$$
 Then $\sum\limits_{i=1}^mr^t_iz_i\in L ~(t=1,\dots,p)$. Thus there exists a $\GV$-ideal $J_2=\langle j^1_2,\dots,j^l_2\rangle$ such that $j^s_2\sum\limits_{i=1}^mr^t_iz_i=\sum\limits_{i=1}^kr'^{t,s}_ix_i $ for some $\{r'^{t,s}_i\mid i=1,\dots,k; t=1,\dots,p; s=1,\dots,l\}\subseteq R$.
So $j^t_1j^s_2n=\sum\limits_{i=1}^mj^s_2r^t_iw_i+\sum\limits_{i=1}^kr'^{t,s}_iax_i\ (t=1,\dots,k; s=1,\dots,l)$. Thus $$J_1J_2n\subseteq \langle w_1,\dots, w_m, ax_1,\dots, ax_k \rangle$$ implies that $N$ is $w$-finite type, which is a  contradiction.

We claim that $M(\p)\subseteq N$. Assume on the contrary that there exists an element $y\in M(\p)$ such that $y\not\in N$. Then there exists $t'\in R \setminus \p$ such that $t'y\in \p M=(N:M)M\subseteq N$. As $t'\not\in \p=(N:M)$,  it follows that $t'M\not\subseteq N$. Therefore $N+t'M$ is $w$-finite type.
Let $0\rightarrow F_3\rightarrow N+t'M\rightarrow T_3\rightarrow 0$ be an exact sequence with $T_3$  $\GV$-torsion and $F_3=\langle u_1,\dots,u_m\rangle$ a finitely generated submodule of $N+t'M$.
Write $u_i=w_i+t'z_i ~(i=1,\dots,m)$ with $w_i\in N$ and $z_i\in M$. Set $L:=\{x\in M \mid tx\in N\}$. Then $N\subset N+Ry\subseteq L$, and hence $L$ is $w$-finite type. Let $0\rightarrow F_4\rightarrow L\rightarrow T_4\rightarrow 0$ be an exact sequence with $T_4$  $\GV$-torsion and $F_4=\langle u_1,\dots,u_n\rangle$ a finitely generated submodule of $L$. Let $n$ be an element in $N$. Then there is a $\GV$-ideal $J_3=\langle j^1_3,\dots,j^k_3\rangle$ such that $J_3n\subseteq F_3$. So there is $\{r^t_i\mid t=1,\dots,p; i=1,\dots, m \}\subseteq  R$ such that
 $$j^t_3n=\sum\limits_{i=1}^mr^t_iu_i=\sum\limits_{i=1}^mr^t_iw_i+t'\sum\limits_{i=1}^mr^t_iz_i\  (t=1,\dots,p).$$
So $\sum\limits_{i=1}^mr^t_iz_i\in L\  (t=1,\dots,p)$. Thus there exists a $\GV$-ideal $J_4=\langle j^1_4,\dots,j^l_4\rangle$ such that $j^s_4\sum\limits_{i=1}^mr^t_iz_i=\sum\limits_{i=1}^nr'^{t,s}_iu_i$ for some $\{r'^{t,s}_i\mid i=1,\dots,m; t=1,\dots,p; s=1,\dots,l\}\subseteq R$. So $j^t_3j^s_4n=\sum\limits_{i=1}^mj^s_4r^t_iw_i+\sum\limits_{i=1}^kr'^{t,s}_it'u_i\ (t=1,\dots,k; s=1,\dots,l)$. Thus $J_3J_4n\subseteq \langle w_1,\dots, w_m, t'u_1,\dots, t'u_k \rangle $ implies that $N$ is $w$-finite type, which is a  contradiction.

Let $\m$ be a maximal $w$-ideal of $R$ and $F=\langle m_1, \dots , m_k\rangle$ a submodule of $M$ such that $M/F$ is $\GV$-torsion. So $M_\m=F_\m$. Then $(N:_RM)_\m=(N_\m:_{R_\m}M_\m)=(N_\m:_{R_\m}F_\m)=(N:_RF)_\m$ by Lemma \ref{loc-w-f}. By \cite[Section 6.10, Exercise 6.8]{fk16}, $(N:_RM)$ and $(N:_RF)$ are all $w$-ideals. So we have $\p=(N:_RM)=(N:_RF)=\bigcap\limits_{i=1}^k(N:Rm_i)$. By \cite[Proposition 1.11]{am69}, $\p=(N:_RRm_j)$ for some $1\leq j\leq k$. Since $m_j\not\in N$, it follows that $N+Rm_j$ is $w$-finite type.  Let $0\rightarrow F_5\rightarrow N+Rm_j\rightarrow T_5\rightarrow 0$ be an exact sequence with $T_5$  $\GV$-torsion and $F_5=\langle y_1,\dots,y_m\rangle$ a finitely generated submodule of $N+Rm_j$.  Write $y_i=w_i+a_im_j$ for some $w_i\in N$ and $a_i\in R~ (i=1,\dots,m)$. Let $n$ be an element in $N$.  Then there is a $\GV$-ideal $J_5=\langle j^1_5,\dots,j^l_5\rangle$ such that $J_5n\subseteq F_5$. So there is $\{r^t_i\mid t=1,\dots,p; i=1,\dots, m \}\subseteq  R$ such that  $j_5^tn=\sum\limits_{i=1}^mr_i^ty_i=\sum\limits_{i=1}^mr_i^tw_i+(
\sum\limits_{i=1}^mr_i^ta_i)m_j\ (t=1,\dots,l)$. So $\sum\limits_{i=1}^mr_i^ta_i\in \p$.  Thus $J_5N\subseteq \langle w_1,\dots,  w_m \rangle +\p m_j$. As $\Ann(M)\subseteq (N:M)=\p$, there exists a $w$-finite type submodule $N^{\p}$ of $M$ such that $\p M\subseteq N^{\p}\subseteq M(\p)$. Thus
\begin{eqnarray*}
  J_5N & \subseteq & \langle w_1,\dots, w_m \rangle + \p m_j \\
   & \subseteq & \langle w_1,\dots, w_m \rangle + \p M \\
   & \subseteq & \langle w_1,\dots,  w_m \rangle + N^\p \\
   &\subseteq & \langle w_1,\dots,  w_m \rangle +M(\p) \\
  & \subseteq & N
\end{eqnarray*}
Since $N^{\p}+\langle w_1,\dots,  w_m \rangle $ is $w$-finite type, it follows that $N$ is also $w$-finite type, which is a  contradiction.
Hence $M$ is $w$-Noetherian.
\end{proof}

Taking $M:=R$, we have the following characterization of $w$-Noetherian rings.

\begin{corollary} \cite[Theorem 4.7(1)]{ywzc11} Let $R$ be a ring. Then $R$ is a $w$-Noetherian ring if and only if each prime $w$-ideal of $R$ is $w$-finite type.
\end{corollary}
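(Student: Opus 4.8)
The plan is to specialize the preceding theorem (the $w$-Noetherian Cohen theorem) to the module $M := R$ and to check that both its standing hypotheses and its interpolation condition degenerate into the asserted statement about prime $w$-ideals. Recall that, by definition, $R$ is a $w$-Noetherian ring precisely when $R$ is $w$-Noetherian as a module over itself, so it suffices to read off the preceding theorem in the case $M = R$.

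First I would verify that $R$, regarded as a module over itself, meets the hypotheses. It is $w$-finite type because it is generated by $1$, hence finitely generated. For $\GV$-torsion-freeness, observe that for any $J \in \GV(R)$ the natural map $R \to \Hom_R(J,R)$ is by definition an isomorphism, in particular injective, so $\Ann(J) = 0$; thus $Jx = 0$ forces $x = 0$, giving $\tor(R) = 0$. (The same isomorphism yields $\Ext^1_R(R/J,R) = 0$, so $R$ is in fact a $w$-module, but only $\GV$-torsion-freeness is needed here.) Hence the preceding theorem applies with $M = R$.

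Next I would simplify the two conditions it involves. Since $\Ann(R) = 0$, the requirement $\Ann(M) \subseteq \p$ holds for every prime $w$-ideal $\p$ and so imposes no restriction. For the interpolation interval, note that $\p M = \p R = \p$, while $R(\p) = \{x \in R \mid sx \in \p R = \p \text{ for some } s \in R \setminus \p\}$ equals $\p$ itself: if $sx \in \p$ with $s \notin \p$, primeness of $\p$ gives $x \in \p$, and conversely $x = 1\cdot x \in \p$ shows $\p \subseteq R(\p)$. Therefore the chain $\p M \subseteq N^\p \subseteq M(\p)$ collapses to $\p \subseteq N^\p \subseteq \p$, forcing $N^\p = \p$. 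Consequently the existence of a $w$-finite type submodule $N^\p$ lying in the prescribed interval is equivalent to $\p$ itself being $w$-finite type.

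Combining these observations, the preceding theorem reads: $R$ is $w$-Noetherian as an $R$-module if and only if every prime $w$-ideal $\p$ (the annihilator condition being vacuous) is $w$-finite type, which is exactly the asserted characterization of $w$-Noetherian rings. I do not expect a substantive obstacle here; the only step demanding genuine care is confirming that $R$ satisfies the $\GV$-torsion-free hypothesis via the defining isomorphism of $\GV$-ideals, after which the collapse $R(\p) = \p R = \p$ makes the squeeze force $N^\p = \p$ automatically.
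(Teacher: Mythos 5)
Your proposal is correct and matches the paper's own proof, which consists precisely of the remark ``Taking $M:=R$'' applied to the preceding theorem; you have simply filled in the routine verifications (that $R$ is $\GV$-torsion-free and $w$-finite type, that $\Ann(R)=0$ makes the annihilator condition vacuous, and that $R(\p)=\p$ collapses the interval to force $N^\p=\p$).
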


\begin{acknowledgement}\quad\\
The first author was supported by the National Natural Science Foundation of China (No. 12061001).
\end{acknowledgement}

\end{document}